\newtheorem{theorem}{Theorem}
\newtheorem{proposition}[theorem]{Proposition}
\newtheorem{corollary}[theorem]{Corollary}
\theoremstyle{definition}
\theoremstyle{remark}
\newtheorem{remark}[theorem]{Remark}
\DeclareMathOperator{\imm}{Im} 
 \DeclareMathOperator{\re}{Re}
\DeclareMathOperator{\diam}{diam}
\renewcommand{\phi}{\varphi}
\subjclass[2010]{32F17, 32T40}
\begin{document}

\title{Peak functions in $\mathbb C$-convex domains}

\address{Carl von Ossietzky Universit\"at Oldenburg, Institut f\"ur Mathematik, Postfach 2503, ¨
D-26111 Oldenburg, Germany}

\address{Institute of Mathematics, Faculty of Mathematics and Computer Science, Jagiellonian
University,  \L ojasiewicza 6, 30-348 Krak\'ow, Poland}

\author{Peter Pflug}
\email{Peter.Pflug@uni-oldenburg.de}
\author{W\l odzimierz Zwonek}
\email{wlodzimierz.zwonek@uj.edu.pl}
\thanks{The research was initiated while second author's research stay at the Carl von Ossietzky University of Oldenburg supported by the Alexander von Humboldt Foundation. The second Author was also supported by the OPUS grant no. 2015/17/B/ST1/00996 of the National Science Centre, Poland.}
\today

\keywords{(strictly) $\mathbb C$-convex domain, peak functions (points), shadow of a $\mathbb C$-convex domain}

\begin{abstract} In the paper we show the existence of different types of peak functions in classes of $\mathbb C$-convex domains. As one of tools used in this context is a result on preserving the regularity of $\mathbb C$-convex domains under projection.

\end{abstract}
\maketitle

\section{Introduction.}
A domain $D\subset\mathbb C^n$ is called {\it $\mathbb C$-convex} if for any affine complex line $l$ passing through the domain $D$ the intersection $D\cap l$ is connected and simply connected. The notion of $\mathbb C$-convexity may be seen as a complex analogue of the notion of convexity. However, as it is well known and as we shall also see below the geometry and regularity of different natural phenomena is in the case of $\mathbb C$-convex domains much more complicated than for the convex ones.

It is trivial that the $\mathbb C$-convexity is preserved under complex affine isometries. On the other hand the fact that the image under projection of a bounded $\mathbb C$-convex  domain remains $\mathbb C$-convex is much harder to prove (see e. g. Theorem 2.3.6 in \cite{And-Pas-Sig 2004})  -- recall that the analogous property in the case of convex domains is trivial. This invariance of $\mathbb C$-convexity will be crucial in the subsequent considerations. Another fact that will be used quite often by us is the fact that $\mathbb C$-convex domains are necessarily {\it linearly convex} -- i. e. for any point $p\in\mathbb C^n\setminus D$ there is a complex affine hyperplane passing through $p$, which is disjoint from $D$. A good reference for properties of $\mathbb C$-convex domains is \cite{Hor 1994} or \cite{And-Pas-Sig 2004}.

In this paper we try to see to what extent some of the properties that are satisfied by convex domains may be transferred to $\mathbb C$-convex domains. The problems discussed involve the following general ones that are quite well understood in the case of convex domains.

\begin{itemize}
\item To what extent is the smoothness of the domain preserved under projections?
\item Find general assumptions on a domain sufficient for the existence of different types of peak functions.
\end{itemize}
In Section \ref{section:extreme} we introduce two classes of domains (extreme and strictly) $\mathbb C$-convex. When restricted to $\mathbb C$-convex domains the extreme $\mathbb C$-convex domains admit (global) holomorphic peak functions (Proposition~\ref{proposition:extreme}). Trying to get the peak points in boundaries of $\mathbb C$-convex domains we follow the ideas from the convex situation, which makes us recall the results on the regularity of shadows of convex domains (Section \ref{section:shadow}). Thus  we are led to results on the regularity of shadows of sufficiently regular $\mathbb C$-convex domains (strictly $\mathbb C$-convex and normal) and as a simple corollary we get the proof of the existence of $A(D)$-peak functions in a wide variety of $\mathbb C$-convex domains (Theorem~\ref{theorem:shadow-c-convex} and Corollary~\ref{corollary:peak-function-c-convex}). It is worth noting that both notions of strict $\mathbb C$-convexity and normality seem quite natural. The first one is  an analog of strict convexity and the other one  a generalization of a regularity of a plane domain which appeared in the theory of regularity of the Riemann mapping. In Section \ref{section:strongly-linearly-convex} we show invariance of the strong linear convexity under projection which is in contrast with the situation of the strong pseudoconvexity.

\section{Preliminary results on the existence of peak functions -- case of extreme $\mathbb C$-convex domains.}\label{section:extreme}
Recall that in the paper \cite{Kos-Zwo 2013} a notion of a weak peak function (and a weak peak point) was introduced. For a domain $D\subset\mathbb C^n$, $p\in\partial D$, the holomorphic function $f:D\to\mathbb D$ is called {\it a weak peak function} if $f$ extends continuously to $D\cup\{p\}$ with $f(p)=1$; in this situation $p$ is called  {\it a weak peak point}.
It was proven in  Theorem 2.2 in \cite{Kos-Zwo 2013} that any boundary point of a bounded $\mathbb C$-convex domain admits a weak peak function.
It turns out that under some additional assumption on the domain the peak function may be chosen to be a little more regular. However, at this point we do not yet prove the existence of peak functions
from the algebra $A(D)$ - the space of all continuous function on $\overline{D}$ that are holomorphic on $D$. We start with a discussion on different possibilities of defining a geometric notion of some kind of strict (or extreme) $\mathbb C$-convexity.

\subsection{Extreme (strict) $\mathbb C$-convexity -- a general discussion.} Below we discuss briefly different approaches to a phenomenon of complex geometric extremality (extreme or strict). The different notions, as we shall see, allow different classes of peak functions.

Let $D$ be a bounded domain in $\mathbb C^n$. We call a point $p\in\partial D$ {\it extreme $\mathbb C$-convex} if there are a complex affine hyperplane $H$ and a neighborhood $U$ of $p$ such that $H\cap \overline{D}\cap U=\{p\}$. In case $U$ may be chosen to be $\mathbb C^n$ the point $p$ is called {\it globally extreme $\mathbb C$-convex}. The domain $D$ is called {\it (globally) extreme $\mathbb C$-convex} if any its boundary point is (globally) extreme $\mathbb C$-convex.

Note that all planar domains are extreme $\mathbb C$-convex.

In \cite{Zim 2017} the Author introduced in the class of convex domains the notion of $\mathbb C$-strict convexity. We adopt the notion to a more general class of $\mathbb C$-convex domains. But let us begin without restriction to $\mathbb C$-convex domains.

Let $D\subset\mathbb C^n$ be a bounded linearly convex domain. We call the boundary point  $p\in\partial D$ {\it $\mathbb C$-strictly $\mathbb C$-convex} or shortly {\it strictly $\mathbb{C}$-convex}  if there is a neighborhood $U$ of $p$ such that $U\cap\overline{D}\cap H=\{p\}$ for any supporting hyperplane $H$ of $D$ at $p$ (i. e. $H$ is a complex affine hyperplane with $p\in H$, $H\cap D=\emptyset$). In the case the neighborhood $U$ is chosen to be the whole $\mathbb C^n$ the point $p$ is called {\it globally strictly $\mathbb C$-convex}. The domain $D$ is called {\it (globally) strictly $\mathbb C$-convex} if any of its boundary point is (globally) strictly $\mathbb C$-convex.

Certainly, if $p$ is strictly $\mathbb C$-convex, then it is extreme $\mathbb C$-convex.  In the class of $C^1$-smooth $\mathbb C$-convex domains both notions are equivalent.

The (purely geometric) notion of strict $\mathbb C$-convexity is an intermediate one between the general $\mathbb C$-convexity and the strong linear convexity (to be discussed later).  In the convex case we have analogous notions: strict convexity (geometric condition) and strong convexity (differential condition).

\begin{remark} We have the following simple property. The proof of the property below needs only the definition of the $\mathbb C$-convexity. Therefore, we omit details of it.

Let $D$ be a bounded $\mathbb C$-convex domain in $\mathbb C^n$, $p\in\partial D$. Then
\begin{itemize}
\item
$p$ is extreme $\mathbb C$-convex iff $p$ is globally extreme $\mathbb C$-convex;
\item
$p$ is strictly $\mathbb C$-convex iff $p$ is globally strictly $\mathbb C$-convex.
\end{itemize}
\end{remark}

\begin{remark} Note that one may consider the following (formally) stronger condition than the strict $\mathbb C$-convexity: for any boundary point $p\in\partial D$ and any complex affine line $l$ passing through $p$ and disjoint from $D$ we have $l\cap\overline{D}=\{p\}$. In the class of $\mathbb C$-convex domains both notions are in fact equivalent.

In fact, assume that for some strictly $\mathbb C$-convex domain there are points $p,q\in\partial D$, $p\neq q$ and a complex line $l$ connecting $p$ and $q$  disjoint from $D$. Composing with affine complex isomorphisms we may assume that
$l=\{0\}^{n-1}\times\mathbb C$, $p=0$. Then one can easily see that the projection of $D$ on $\mathbb C^{n-1}$ gives a $\mathbb C$-convex domain with $0^{\prime}\in\partial\pi(D)$. Let $H^{\prime}$ be a supporting hyperplane at $0^{\prime}$ to $\pi(D)$. Then one may easily see that $H:=H^{\prime}\times\mathbb C$ is a supporting hyperplane to $D$ and $p,q\in H$ -- contradiction with the strict $\mathbb C$-convexity of $D$.

Below we shall make use of the above property of strict $\mathbb C$-convex domains.
\end{remark}

\subsection{Peak functions in extreme $\mathbb C$-convex domains.}
A point $p\in\partial D$, where $D$ is a domain in $\mathbb C^n$, is called a {\it (holomorphic) peak point} if there is a holomorphic function (called {\it (holomorphic) peak function})
$f:D\to\mathbb D$ such that for any open neighborhood $U$ of $p$
\begin{equation}
\sup\{|f(z)|:z\in U\setminus D\}<1=\lim_{z\to p}f(z).
\end{equation}
Then we have the following.
\begin{proposition}\label{proposition:extreme}
Let $D$ be a bounded $\mathbb C$-convex domain in $\mathbb C^n$ and let $p\in\partial D$ be extreme $\mathbb C$-convex. Then $p$ is a holomorphic peak point.
\end{proposition}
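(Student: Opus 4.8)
The plan is to produce a holomorphic peak function explicitly from the extremal hyperplane, using the fact that $p$ being extreme $\mathbb{C}$-convex is equivalent to being \emph{globally} extreme $\mathbb{C}$-convex (as recorded in the first Remark above, this equivalence holds for bounded $\mathbb{C}$-convex domains). So first I would invoke that equivalence to obtain a complex affine hyperplane $H$ with $H\cap\overline{D}=\{p\}$, i.e. $H$ meets the closure of $D$ in the single point $p$. After an affine change of coordinates (which preserves $\mathbb{C}$-convexity) I may assume $p=0$ and that $H=\{z\in\mathbb{C}^n: \ell(z)=1\}$ for a suitable complex-linear functional $\ell$ with $\ell(0)=0$; equivalently, writing $H=\{z:L(z)=0\}$ for an affine functional $L$ vanishing on $H$, we have $L(z)\neq 0$ for all $z\in\overline{D}$ except that $L(p)=0$.

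The key step is then to consider the function $f(z)=1/(1-\ell(z))$ or, more robustly, to build a peak function out of the reciprocal of the affine functional defining $H$. Concretely, since $H\cap\overline{D}=\{p\}$ and $\overline{D}$ is compact, the affine functional $L$ defining $H$ is bounded and bounded away from zero on $\overline{D}\setminus U$ for every neighborhood $U$ of $p$. The natural candidate is
\begin{equation}
f(z)=\frac{1}{1-\ell(z)},
\end{equation}
normalized so that $\ell(p)=0$ gives $f(p)=1$, and where $\ell$ is chosen so that $\re\,\ell(z)<0$ on $D$ and $\ell$ extends continuously with $\ell(p)=0$. The linear convexity of $D$ (guaranteed since $\mathbb{C}$-convex domains are linearly convex) together with the global extremality of $H$ should let me arrange that $\ell$ maps $\overline{D}$ into the closed left half-plane $\{\re w\le 0\}$ with $\ell^{-1}(0)\cap\overline{D}=\{p\}$; composing with the Cayley-type map $w\mapsto 1/(1-w)$ from the half-plane into $\mathbb{D}$ then yields a holomorphic $f:D\to\mathbb{D}$ with $f(p)=1$ and $|f|<1$ on $\overline{D}\setminus\{p\}$. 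The peaking inequality in the definition, namely $\sup\{|f(z)|:z\in U\setminus D\}<1$, follows because $f$ extends continuously past $p$ to all of $\mathbb{C}^n\setminus H$ and $|f|<1$ on a neighborhood of $\overline{D}\setminus\{p\}$.

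The main obstacle I expect is justifying that the affine functional $\ell$ really maps $\overline{D}$ into a half-plane (or more generally a region on which the post-composition lands in $\mathbb{D}$) and not merely into the complement of a single point. Disjointness of $H$ from $D$ only gives $\ell(z)\ne 0$ on $D$, which by itself places the image in the punctured plane $\mathbb{C}\setminus\{0\}$ — not enough to compose with a fixed map into $\mathbb{D}$. Here is where $\mathbb{C}$-convexity must do real work: for each line $l$ through $p$, the slice $D\cap l$ is simply connected and omits $0$ after applying $\ell$, so the image $\ell(D)$ is a domain in $\mathbb{C}\setminus\{0\}$ whose intersection with each line through the origin is simply connected. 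I would argue that $\ell(D)$ (or the image of $D$ under the induced one-variable map) is itself simply connected and omits the origin, so that a branch of $\log$ or of $\sqrt{\cdot}$ exists on it; one then rotates and scales to push $\ell(D)$ into a half-plane or sector and composes with the Riemann map onto $\mathbb{D}$. Verifying simple connectivity of the relevant image and the requisite continuity up to the boundary point $p$ is the delicate part; the boundedness of $D$ and compactness of $\overline{D}$ keep all the estimates uniform and give the strict inequality on $U\setminus D$ for free once the open-mapping and boundary-continuity facts are in place.
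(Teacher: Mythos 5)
Your skeleton is the paper's: use global extremality to get $H\cap\overline{D}=\{p\}$, map $D$ by the linear functional $\ell$ whose zero set is $H$ (this is exactly the paper's projection $\pi$ onto a line in the direction of $H$), get a simply connected planar image with $\ell(p)$ on its boundary, compose a one-variable peak function with $\ell$, and localize via compactness plus $H\cap\overline{D}=\{p\}$ (your localization remark matches the paper's sequence argument). But the one-variable step, which you yourself flag as the delicate part, contains a genuine gap. The claim that after rotating and scaling one can push $\ell(D)$ into a half-plane or sector is false: images of bounded $\mathbb{C}$-convex domains under such projections can spiral around the boundary point infinitely often (they are in general only spirally connected; see Corollary 2.6.7 in \cite{And-Pas-Sig 2004} or \cite{Zna-Zna 1996}, both cited in the paper), and no affine normalization places a spiraling domain in a sector. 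Arranging $\re\ell<0$ on $D$ is precisely the real-convex picture and is unavailable here --- the hyperplane is merely disjoint from $D$, not supporting in the real sense; this is the essential difference between convexity and $\mathbb{C}$-convexity. Your fallback via the Riemann map of $\ell(D)$ also fails as stated: with no boundary regularity the Riemann map need not have a limit at the prime end(s) over $\ell(p)$, so you cannot conclude $\lim_{z\to p}f(z)=1$. A second, more citable gap: simple connectivity of $\ell(D)$ does not follow from the slices $D\cap l$ being simply connected (your ``intersection with each line through the origin'' argument does not parse in the plane); it is the nontrivial projection theorem --- images of bounded $\mathbb{C}$-convex domains under projections are $\mathbb{C}$-convex (Theorem 2.3.6 in \cite{And-Pas-Sig 2004}) --- which must be invoked, as the paper does.

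The repair needs no straightening of $\ell(D)$ at all: simple connectivity alone suffices, because for $R:=\diam\ell(D)$ any branch of
\begin{equation}
w(\lambda)=\log\bigl(R/(\lambda-\ell(p))\bigr)
\end{equation}
on the simply connected domain $\ell(D)$ automatically satisfies $\re w(\lambda)=\log\bigl(R/|\lambda-\ell(p)|\bigr)\geq 0$ --- spiraling only affects $\im w$, which is harmless --- so that $f(\lambda)=\exp(-1/w(\lambda))$ maps $\ell(D)$ into $\mathbb{D}$ and tends to $1$ exactly as $\lambda\to\ell(p)$. This is the explicit peak function $f(\lambda)=\exp\bigl(-1/\log(\diam\pi(D)/(\lambda-\pi(p)))\bigr)$ from Theorem 2.2 of \cite{Kos-Zwo 2013} that the paper composes with the projection; once you substitute it for your Cayley/Riemann-map step, the rest of your argument (in particular the localization $\sup\{|f(\ell(z))|:z\in D\setminus U\}<1$, proved by a compactness argument using $H\cap\overline{D}=\{p\}$) goes through and coincides with the paper's proof.
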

\begin{proof}
Let $l$ be a complex line intersecting $D$ and passing through $p$, let $H$ be a hyperplane such that $H\cap\overline{D}=\{p\}$. Let $\pi$ be the projection on $l$ (treated later as the complex plane) in the direction of the hyperplane $H$; in particular, $p=\pi(p)$ and $H=\pi^{-1}(p)$. Then $l\cap D\subset\pi(D)$ and $\pi(D)$ is simply connected (use $\mathbb C$-convexity of $D$ -- see \cite{And-Pas-Sig 2004}). Take a sequence $p_j\in D$ which converges to $p$. Then $l\cap\pi(D)\ni \pi(p_j)$ also converges to $p$ implying that $p\in \overline{\pi(D)}$. To conclude that $p\in\partial\pi(D)$ it is sufficient to note that from the definition of the projection $\pi$ we get that $p=\pi(p)\not\in\pi(D)$.
Let $f:\pi(D)\to\mathbb D$ be a function as in Theorem 2.2 in \cite{Kos-Zwo 2013}  (recall $f(\lambda):=\exp(-1/\log(\diam \pi(D)/(\lambda-\pi(p))))$) - note that it is in particular a holomorphic peak function for $\pi(D)$ at $p$). We put $F:=f\circ \pi$. We claim it is a holomorphic peak function for $D$ at $p$.

In fact, let $U$ be a neighborhood of $p$. Then, because of extreme $\mathbb C$-convexity at $p$, we find a neighborhood $V$ of $\pi(p)$ in $l$ such that
$\pi^{-1}(V\cap\pi(D))\cap D\subset U$. Actually, otherwise there would exist a sequence $(q_j)\subset D$ with $q_j\to q\in\overline{D}$, $q\neq p$, $\pi(q_j)\to\pi(p)=p$, so $\pi(q)=\pi(p)$ and thus $q\in H$ -- contradiction with the equality $H\cap\overline{D}=\{p\}$. Then
\begin{multline}
\sup\{|F(z)|:z\in D\setminus U\}\leq\sup\{|f(\lambda)|:\lambda\in\pi(D)\setminus V\}<1\\
=\lim_{\pi(D)\owns\lambda\to \pi(p)}f(\lambda)=\lim_{z\to p}F(z).
\end{multline}
\end{proof}

\begin{remark} Recall that a domain $D\subset \mathbb C^n$ has the  ($P$) property at $p\in\partial D$ if for any neighborhood $U$ of $p$  (see \cite{Com 1998})
\begin{equation}
\lim_{z\to p}\inf\{g_D(z,w):w\in D\setminus U\}=0.
\end{equation}
Here $g_D(p,\cdot)$ denotes {\it the pluricomplex Green function with the pole at $p\in D$}.

The existence of a holomorphic peak function at $p$ implies that $p$ has the property ($P$) (see \cite{Com 1998}). Therefore, any extreme $\mathbb C$-convex point of a bounded $\mathbb C$-convex domain $D$ has the property ($P$). Consequently, it follows from \cite{Nik 2002} that the localization property of invariant metrics holds. In particular, for any neighborhood $U$ of $p$ we have the following (uniform) convergence
\begin{equation}
\lim_{z\to p}\frac{\kappa_D(z;X)}{\kappa_{U\cap D}(z;X)}=1,\; X\in\mathbb C^n, X\neq 0,
\end{equation}
where $\kappa_D$ denotes {\it the Kobayashi metric}.
\end{remark}

\section{Regularity of shadows -- a general discussion.}\label{section:shadow}
Having proven a result on the existence of peak points one could try to get positive results on the existence of peak functions for the function algebra $A(D)$ for more  regular $\mathbb C$-convex domains.

Recall that in the case of convex domains a general idea of constructing such peak functions often relies upon projecting the domain in the direction of the supporting hyperplane onto the plane and then making use of the fact that projections of convex domains are convex, too and then applying the existence of peak function in convex planar domains. Therefore, it is natural from that point of view to study the regularity of projections of $\mathbb C$-convex domains. Recall that $\mathbb C$-convexity is preserved under such mappings; however, unlike in the convex case, not all $\mathbb C$-convex planar domains have all the points being $A(D)$-peak points (consider $\mathbb D\setminus[0,1)$). Thus it is natural that one has to impose some regularity condition on a $\mathbb C$-convex domain to have the image to be regular enough to admit peak functions.

The (well-understood) problem of the regularity of shadows of convex domains  will be the basis of our subsequent considerations. Let $\pi:\mathbb R^{n+m}\to\mathbb R^n$ be the projection onto the first $n$ coordinates. Let $D\subset\mathbb R^{n+m}$ be convex. The domain $\tilde D:=\pi(D)$ is called {\it shadow}.

We should make several remarks here.

\begin{remark} The problem of preserving the regularity of convex sets under projections has been studied intensively.

The results known include for instance the following:
\begin{itemize}
\item The shadow of a $C^{1,1}$-smooth convex domain is $C^{1,1}$-smooth. This follows easily from the description of $C^{1,1}$-regularity with the aid of the ball condition, a well known folklore that can be found (with proof) for instance in
\cite{Aik-Kil-Sha-Zho 2007}.
\item For three dimensional convex domains ($n=2$, $m=1$) we have the following regularity results for the shadows. The shadow of a $C^1$-smooth convex domain is $C^1$-smooth, the shadow of a $C^{2,1}$-smooth convex domain is $C^2$, and the shadow of a real analytic convex domain is $C^{2,\alpha}$-smooth for some $\alpha>0$. All these results are essentially sharp (see \cite{Kis 1986}).
\item In higher dimensional cases the previous regularities of the shadow of smooth convex domains are essentially weaker than in dimension three (see \cite{Sed 1989}, \cite{Bog 1990}).
\end{itemize}
The above results show that even in the class of convex domains a very small relaxation of regularity properties of the domain leads to a decisive change in the regularity of the shadow. Therefore, it is reasonable to suspect that in the case of $\mathbb C$-convex domains the situation may even be more complicated and difficult. As we shall see in the subsequent section this is actually the case.
\end{remark}

\section{Peak functions in strictly $\mathbb C$-convex domains.}\label{section:shadow-c-convex}

In addition to earlier introduced notions of extreme and strict $\mathbb C$-convexity we introduce another natural geometrical condition.

Let $D$ be a bounded domain in $\mathbb C^n$. We say that the point $p\in\partial D$ is {\it normal} if for any neighborhood $U$ of $p$ there is a neighborhood $V\subset U$ of $p$ such that any two points $w,z\in V\cap D$ may be connected by a curve lying entirely in $U\cap D$.

In the case $n=1$ if $p\in\partial D$ is normal, then it is  "ein normaler Randpunkt'' as it is defined in \cite{Beh-Som 1965} (definition on p. 359). Consequently, if all the boundary points of a bounded $\mathbb C$-convex domain in $\mathbb C$ are normal then the Riemann mapping extends homeomorphically onto the closures and the boundary is a Jordan curve (Theorems 41 and 42 in \cite{Beh-Som 1965}). Therefore, if all boundary points of a simply connected domain are normal, then all the boundary points are peak points for the algebra $A(D)$. Moreover, planar simply connected normal domains, i.e. each of its boundary point is a normal one, are fat.

\bigskip

It turns out that the above properties behave well under projections.

\begin{theorem}\label{theorem:shadow-c-convex}
Let $D$ be a bounded strictly $\mathbb C$-convex domain in $\mathbb C^{n+m}$ that is normal (i.e. any boundary point of $D$ is normal).
Then $\tilde D:=\pi(D)\subset\mathbb C^n$ is a strictly  $\mathbb C$-convex normal domain.
\end{theorem}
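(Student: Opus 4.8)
The plan is to treat the two conclusions — strict $\CC$-convexity and normality of $\tilde D$ — separately, both resting on a single preliminary observation about the fibres of $\pi$ over $\partial\tilde D$. Since $D$ is bounded and $\CC$-convex, its projection $\tilde D=\pi(D)$ is again a bounded $\CC$-convex (in particular linearly convex) domain by the cited Theorem 2.3.6 of \cite{And-Pas-Sig 2004}; hence $\tilde D$ admits a supporting hyperplane at each boundary point, and it remains only to verify strict $\CC$-convexity and normality pointwise on $\partial\tilde D$. Throughout I would use that $\overline{\tilde D}=\pi(\overline D)$ (the projection of the compact set $\overline D$ is compact, hence closed), so every $\tilde p\in\partial\tilde D$ has at least one lift $p\in\overline D$ with $\pi(p)=\tilde p$, necessarily $p\in\partial D$ since $p\in D$ would force $\tilde p\in\tilde D$.

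The heart of the matter is the following singleton-fibre identity. Fix $\tilde p\in\partial\tilde D$ and a supporting hyperplane $\tilde H$ of $\tilde D$ at $\tilde p$. Then $H:=\pi^{-1}(\tilde H)$ is a complex affine hyperplane in $\CC^{n+m}$ containing every lift of $\tilde p$ and satisfying $H\cap D=\emptyset$, since $z\in H\cap D$ would give $\pi(z)\in\tilde H\cap\tilde D=\emptyset$. Thus $H$ is a supporting hyperplane of $D$ at a lift $p$, and as $D$ is strictly $\CC$-convex and $\CC$-convex it is globally strictly $\CC$-convex (for $\CC$-convex domains strict and global strict $\CC$-convexity coincide), so $H\cap\overline D=\{p\}$. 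Because $\pi^{-1}(\tilde p)\subset H$, this yields
\begin{equation}
\pi^{-1}(\tilde p)\cap\overline D=\{p\},
\end{equation}
i.e. \emph{the lift of each boundary point of $\tilde D$ is unique}. Running the same computation with an arbitrary supporting hyperplane $\tilde H$ at $\tilde p$ and using $\pi(\pi^{-1}(\tilde H)\cap\overline D)=\tilde H\cap\overline{\tilde D}$, one obtains $\tilde H\cap\overline{\tilde D}=\{\tilde p\}$; thus $\tilde p$ is globally strictly $\CC$-convex, which for the $\CC$-convex domain $\tilde D$ is equivalent to strict $\CC$-convexity. This settles the first conclusion.

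For normality I would first upgrade the singleton-fibre property to a properness statement: for every neighbourhood $U$ of the unique lift $p$ there is a neighbourhood $W$ of $\tilde p$ in $\CC^n$ with $\pi^{-1}(W)\cap\overline D\subset U$. This is a routine compactness argument — were it false, a sequence $q_j\in\overline D\setminus U$ with $\pi(q_j)\to\tilde p$ would subconverge to some $q\in\overline D\setminus U$ with $\pi(q)=\tilde p$, contradicting $\pi^{-1}(\tilde p)\cap\overline D=\{p\}$. Granting this, given a neighbourhood $\tilde U$ of $\tilde p$ I would choose an open $U\ni p$ with $\pi(U)\subset\tilde U$, then by normality of $D$ at $p$ a neighbourhood $U_0\subset U$ in which any two points of $U_0\cap D$ are joined by a curve inside $U\cap D$, and finally by properness a neighbourhood $W\subset\tilde U$ of $\tilde p$ with $\pi^{-1}(W)\cap\overline D\subset U_0$. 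Setting $\tilde V:=W$, any $\tilde w,\tilde z\in\tilde V\cap\tilde D$ lift to $w,z\in\pi^{-1}(W)\cap D\subset U_0\cap D$, get joined by a curve $\gamma$ in $U\cap D$, and $\pi\circ\gamma$ then joins $\tilde w,\tilde z$ inside $\pi(U\cap D)\subset\tilde U\cap\tilde D$; hence $\tilde p$ is normal.

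The main obstacle — and the only place where the hypotheses enter essentially — is the singleton-fibre identity: strict $\CC$-convexity of $D$ is precisely what collapses the whole slice $\pi^{-1}(\tilde p)\cap\overline D$ to a single point, and without it neither the well-definedness of the boundary lift nor the properness needed to push curves down would hold. Everything else is continuity, compactness, and the assumed normality of $D$; the only mild care required is to arrange at the outset that $\pi(U)\subset\tilde U$, so that the projected connecting curve stays inside $\tilde U\cap\tilde D$.
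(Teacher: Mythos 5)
Your proof is correct and follows essentially the same route as the paper: strict $\mathbb C$-convexity of $D$ collapses the fibre $\pi^{-1}(\tilde p)\cap\overline D$ over each boundary point to a single point, a compactness argument upgrades this to properness of $\pi$ near that fibre, and the connecting curve supplied by normality of $D$ at the lift is pushed down by $\pi$. The only cosmetic difference is that you derive the fibre collapse by pulling back a supporting hyperplane of $\tilde D$ and invoking the local-to-global equivalence for strict $\mathbb C$-convexity (which, as a bonus, spells out the strict $\mathbb C$-convexity of $\tilde D$ that the paper merely asserts ``follows directly''), whereas the paper applies the equivalent line formulation to the complex line through two putative lifts inside the fibre.
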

\begin{proof} The strict $\mathbb C$-convexity of $\tilde D$ follows directly from the strict $\mathbb C$-convexity of $D$.

To prove the normality of $\tilde D$ let $p\in\partial\tilde D$ and let $\tilde U$ be a neighborhood of $p$ and $(p,q)\in\partial D$. Choose a neighborhood $U$ of $(p,q)$ such that $\pi(U)\subset \tilde U$. The normality of $D$ allows us to choose a neighborhood $V_1\subset U$ of $(p,q)$ such that any two points from $V_1\cap D$ may be connected by a curve lying entirely in $U\cap D$. The strict $\mathbb C$-convexity of $D$ allows us to find a neighborhood $V\subset V_1$ of $(p,q)$ such that $\pi^{-1}(\pi(V)\cap\tilde D)\cap D\subset V_1$. Otherwise we could find a sequence $(p_j,q_j)_j\subset D$ such that $p_j\to p$, $q_j\to \tilde q\neq q$, which would imply that the line connecting $(p,q)$ and $(p,\tilde q)$, which is disjoint from $D$, would contain two different points -- a contradiction.

Put $\tilde V:=\pi(V)$. Then one may easily see that for any two points $\tilde w,\tilde z\in\tilde V\cap\tilde D=\pi(V)\cap\pi(D)$ we find $w,z\in D\cap V_1$ such that $\tilde w=\pi(w)$, $\tilde z=\pi(z)$. But $w$ and $z$ may be connected by a curve $\gamma$ lying entirely in $U\cap D$, which implies that the curve $\pi\circ\gamma$ joining $\tilde w$ and $\tilde z$ lies entirely in $\tilde U\cap \tilde D$.

\end{proof}

\begin{remark} Note that bounded $C^1$-smooth strictly $\mathbb C$-convex domains are normal.
\end{remark}

\begin{remark} Another, even stronger notion of normality may also be considered. Namely, the point $p\in\partial D$, where $D$ is a bounded domain, is called {\it strongly normal} if for any neighborhood $U$ of $p$ there is a neighborhood $V\subset U$ of $p$ such that the set $V\cap D$ is connected. Then, as we shall see below,  the above theorem would remain true if the notion 'normal' would be replaced by 'strongly normal'. Since we are mainly interested in the fact that (strong) normal points satisfy the notion of a 'ein normaler Randpunkt' we present the proof of that (formally different) fact below only as an observation.

Assume that the assumption in the above theorem of the normality of $D$ is replaced by the strong normality. To prove the strong normality of $\tilde D$, let $p\in\partial\tilde D$ and let $\tilde U$ be a neighborhood of $p$ and
$(p,q)\in\partial D$. Choose a neighborhood $U$ of $(p,q)$ such that $\pi(U)\subset \tilde U$. The strong normality of $D$ allows us to choose $V\subset U$ such that $V\cap D$ is connected. We claim that there is an $\epsilon>0$ such that $B(p,\epsilon)\cap\tilde D\subset \pi(V\cap D)$ which would finish the proof since $\tilde V:=\pi(V\cap D)\cup B(p,\epsilon)$ would be the desired neighborhood of $p$ (note that $\tilde V\cap\tilde D=\pi(V\cap D)$ is connected).
To prove the claim suppose the opposite. Then there is a sequence $(z^k)\subset\tilde D$ tending to $p$ such that for some $w^k$ we have $(z^k,w^k)\in D\setminus V$. Without loss of generality
$w^k\to w\neq q$. But then $(p,w)\in\partial D$ and the complex affine line $l\subset\{p\}^n\times\mathbb C^m$ that connects $(p,q)$ and $(p,w)$ disjoint from $D$ contains two points $(p,q)$ and $(p,w)$ from $\overline{D}$, which contradicts the strict $\mathbb C$-convexity of $D$.
\end{remark}
Recall that for $p\in\partial D$ the function $f\in A(D)$ is called {\it $A(D)$-peak function} if $f(p)=1$ and $|f(z)|<1$, $z\in\overline{D}\setminus\{p\}$; the point $p$ is then {\it an $A(D)$-peak point}.

\begin{corollary}\label{corollary:peak-function-c-convex} Let $D$ be a bounded strictly $\mathbb C$-convex domain in $\mathbb C^n$. Assume that $D$ is normal (for instance $D$ is $C^1$-smooth). Then any boundary point of $D$ is an $A(D)$-peak point.
\end{corollary}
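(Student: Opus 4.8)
The plan is to reduce the statement to the planar case by projecting $D$ in the direction of a supporting hyperplane at $p$, exactly as one does for convex domains, and then to invoke Theorem~\ref{theorem:shadow-c-convex} together with the planar peak result recalled before that theorem. The point of the argument is that strict $\mathbb C$-convexity and normality are precisely the two hypotheses needed to make this reduction produce a genuine $A(D)$-peak function rather than merely a holomorphic peak point as in Proposition~\ref{proposition:extreme}.

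First I would fix $p\in\partial D$ and choose a supporting hyperplane $H$ at $p$, which exists because a $\mathbb C$-convex domain is linearly convex. Since $p$ is strictly $\mathbb C$-convex and, for $\mathbb C$-convex domains, strict $\mathbb C$-convexity agrees with its global version (see the Remark above), this yields the global separation $\overline D\cap H=\{p\}$. After an affine change of coordinates I may assume $p=0$ and $H=\{z_1=0\}$, and then take $\pi(z)=z_1$, the projection onto a line transverse to $H$, so that $\pi^{-1}(0)=H$.

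Next I would apply Theorem~\ref{theorem:shadow-c-convex} with target dimension one to conclude that the shadow $\tilde D=\pi(D)\subset\mathbb C$ is a bounded strictly $\mathbb C$-convex normal domain. Being $\mathbb C$-convex and planar it is simply connected, so it is a simply connected planar domain all of whose boundary points are normal; hence, by the result recalled before the theorem, every boundary point of $\tilde D$ is an $A(\tilde D)$-peak point. As in the proof of Proposition~\ref{proposition:extreme}, one checks that $\pi(p)=0$ lies in $\partial\tilde D$: it is a limit of points of $\tilde D$, while it cannot belong to $\tilde D$ since that would force a point of $D\cap H$, contradicting $\overline D\cap H=\{p\}$.

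Finally I would take an $A(\tilde D)$-peak function $g$ at $0$ and set $f:=g\circ\pi$. Pulling back, $f$ is holomorphic on $D$, extends continuously to $\overline D$ (because $\pi(\overline D)\subset\overline{\tilde D}$) and satisfies $f(p)=1$; for $z\in\overline D\setminus\{p\}$ the global separation $\overline D\cap H=\{p\}$ gives $\pi(z)\neq 0$, whence $|f(z)|=|g(\pi(z))|<1$. This makes $f$ the desired $A(D)$-peak function. I expect the only genuinely substantial content to lie in the two imported results---Theorem~\ref{theorem:shadow-c-convex} and the planar Behnke--Sommer peak statement---the remaining work being routine; the one point to watch is that it is strictness, rather than mere extremality, that produces the global equality $\overline D\cap H=\{p\}$, and this is exactly what upgrades the local peak of Proposition~\ref{proposition:extreme} to a global $A(D)$-peak.
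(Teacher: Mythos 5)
Your proof is correct and is exactly the argument the paper intends: the corollary is stated there without a separate proof, and the strategy spelled out at the start of Section~\ref{section:shadow} (project in the direction of a supporting hyperplane, apply Theorem~\ref{theorem:shadow-c-convex} with one-dimensional target, and use the Behnke--Sommer planar result for simply connected normal domains) is precisely your reduction. You also correctly isolate the one delicate point, namely that the global form of strict $\mathbb C$-convexity (justified by the Remark in Section~\ref{section:extreme}) yields $\overline D\cap H=\{p\}$, which is what makes the pulled-back function satisfy $|f|<1$ on all of $\overline D\setminus\{p\}$.
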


\begin{remark} It would be desirable to try to extend the previous results to a wider class of domains by relaxing some of the assumptions. However, one should be aware of some limits of possible extensions. Namely, the example of a Hartogs domain in $\mathbb C^2$ from \cite{Jac 2008} (§ 3.2.4, pp. 55--58) shows that the fat $\mathbb C$-convex domain may have some of its boundary not being $A(D)$-peak points and the image under projection may be non-fat. Certainly our results above give also the fatness of the image of a projection of the strictly $\mathbb C$-convex normal domain. That kind of property is another one projections of $\mathbb C$-convex domains can have (see the results on spiral connectedness in e. g. Corollary 2.6.7 in \cite{And-Pas-Sig 2004} or \cite{Zna-Zna 1996}).
\end{remark}

\begin{remark} At this point it should be recalled that M. Range introduced in \cite{Ran 1978} the notion of {\it a totally pseudoconvex domain} which may be seen as a generalization of strict $\mathbb C$-convexity in which the supporting hyperplane is replaced by a kind of a supporting hypersurface. And M. Range showed the existence of local peak points in such domains with $C^1$-boundary. In the case the domain is $C^{\infty}$-smooth the peak functions are in fact global ones. Our results therefore extend partially Range's results with much simpler methods.
\end{remark}

\section{Projection of strongly linearly convex domains.}\label{section:strongly-linearly-convex} We now move to the study of the regularity of the shadow in a much more regular class of $\mathbb C$-convex domains.
Let $D\subset\mathbb C^{n+m}$ be a strongly linearly convex $C^2$-domain. In other words there is a $C^2$-defining function $\rho$
defined on a neighborhood $U$ of $\partial D$, which, in particular, satisfies the following properties $\partial D=\{z\in U:\rho(z)=0\}$, $\rho^{\prime}\neq 0$ on $U$, and $\mathcal H \rho(z)(X)\geq c||X||^2$ for all $X\in T_z^{\mathbb C}\partial D\subset T_z\partial D,\; z\in\partial D$ for some $c>0$. Recall that any strongly linearly convex domain is strongly pseudoconvex and moreover strictly $\mathbb C$-convex. The other implication does not hold, even in the class of $C^2$ smooth $\mathbb C$-convex domains.

Let $\pi$ be the projection onto $\mathbb C^n$. Similarly as earlier we may consider the $\mathbb C$-convex domain (called \textit{shadow}) $\tilde D:=\pi(D)\subset\mathbb C^n$. Then we may define the homeomorphism onto its image $\Phi:\partial\tilde D\to\partial D$ such that $\pi\circ\Phi$ is the identity. In fact note that over any point $z\in\partial\tilde D$ there lies exactly one point $\Phi(z)\in\partial D$. Moreover, it easily seen that $\Phi$ is continuous.

\begin{theorem} Let $D$ be a $C^k$-smooth strongly linearly convex domain in $\mathbb C^{n+m}$, $k\geq 2$. Then $\tilde D$ is a $C^k$-smooth  strongly linearly convex domain. Moreover, $T_{z}^{\mathbb C}\partial\tilde D\times\mathbb C^m=T_{\Phi(z)}^{\mathbb C}\partial D$, $z\in\partial\tilde D$, and $\Phi$ is a $C^k$-diffeomorphism onto the image.
\end{theorem}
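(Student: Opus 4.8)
The plan is to work locally near a boundary point and show that the defining function of the shadow inherits the strong linear convexity condition. The key geometric input — already available from the strong linear convexity of $D$ — is that over each point $z\in\partial\tilde D$ there lies a \emph{unique} point $\Phi(z)\in\partial D$, and that the real tangent hyperplane to $\partial D$ at $\Phi(z)$ must contain the full fiber direction $\{z\}\times\mathbb C^m$. Indeed, if the tangent hyperplane were transverse to a fiber direction, then $\pi$ restricted to $\partial D$ near $\Phi(z)$ would be a local submersion onto a neighborhood of $z$ in $\mathbb R^{2n}$, contradicting that $z$ is a \emph{boundary} point of the shadow. This forces $T_{\Phi(z)}\partial D\supset\{0\}\times\mathbb R^{2m}$, and hence the complex tangent space satisfies $T_{\Phi(z)}^{\mathbb C}\partial D\supset\{0\}^n\times\mathbb C^m$; combined with a dimension count this will give the claimed splitting $T_{z}^{\mathbb C}\partial\tilde D\times\mathbb C^m=T_{\Phi(z)}^{\mathbb C}\partial D$.

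\emph{First I would} establish $C^k$-smoothness of $\partial\tilde D$ together with the diffeomorphism statement via the implicit function theorem. Fix $\Phi(z_0)=(z_0,w_0)\in\partial D$. Since the real gradient $\rho'(z_0,w_0)$ annihilates all of $\{0\}\times\mathbb R^{2m}$ by the previous paragraph, the gradient points in a pure base direction, so the $n$ base-variable derivatives of $\rho$ do not all vanish while, crucially, the Hessian restricted to the fiber is positive definite (the fiber directions lie in the complex tangent space, where strong linear convexity gives $\mathcal H\rho(X)\geq c\|X\|^2$). The strict convexity of $\rho$ along each fiber lets me solve, for each base point $z$ near $z_0$, for the unique critical fiber point $w=\psi(z)$ minimizing (resp.\ extremizing) $\rho(z,\cdot)$ on the fiber; by the implicit function theorem applied to $\nabla_w\rho=0$, using nondegeneracy of the fiber Hessian, $\psi$ is $C^{k-1}$, and $\Phi(z)=(z,\psi(z))$ traces out $\partial D$ over $\partial\tilde D$. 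A local defining function for $\tilde D$ is then $\tilde\rho(z):=\rho(z,\psi(z))$, which is $C^{k-1}$ a priori; I would recover the full $C^k$ regularity by noting that along the fiber-critical locus $\nabla_w\rho=0$, so the chain rule gives $\tilde\rho'(z)=\rho_z'(z,\psi(z))$ with the $\psi'$ terms dropping out, showing $\tilde\rho$ is in fact $C^k$ and that $\tilde\rho'\neq 0$.

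\emph{Next I would} verify the strong linear convexity of $\tilde\rho$, i.e.\ positivity of $\mathcal H\tilde\rho(z)$ on the complex tangent space $T_z^{\mathbb C}\partial\tilde D$. Differentiating $\tilde\rho(z)=\rho(z,\psi(z))$ twice and using $\nabla_w\rho=0$ along the locus, the complex Hessian of $\tilde\rho$ in a complex tangent direction $X$ equals the complex Hessian of $\rho$ evaluated on the lifted vector $(X,\psi'(z)X)$ after the standard Schur-complement correction that subtracts the fiber block; more precisely $\mathcal H\tilde\rho(z)(X)=\mathcal H\rho(\Phi(z))(X,D\psi\cdot X)$ where $(X,D\psi\cdot X)$ is the tangent lift of $X$ into $T_{\Phi(z)}^{\mathbb C}\partial D$. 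Since the lift of a nonzero complex tangent vector to the shadow is a nonzero complex tangent vector to $D$, strong linear convexity of $D$ gives $\mathcal H\tilde\rho(z)(X)\geq c\|(X,D\psi\cdot X)\|^2\geq c\|X\|^2$, which is exactly the required uniform positivity for $\tilde\rho$.

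\emph{The hard part will be} the bookkeeping in the two derivative computations — disentangling the real versus complex Hessian (the $\mathcal H$ operator acts on the complex-linear part of the second derivative, so one must track the $\partial\bar\partial$ block separately from the holomorphic $\partial\partial$ block) and confirming that the Schur-complement subtraction of the positive-definite fiber block only \emph{improves} the estimate rather than destroying it. The delicate point is that $\psi$ is determined by the \emph{real} critical-point equation $\nabla_w\rho=0$, whereas the conclusion is a statement about the \emph{complex} tangent/Hessian structure; I expect the reconciliation to rest on the splitting $T_{\Phi(z)}^{\mathbb C}\partial D=T_z^{\mathbb C}\partial\tilde D\times\mathbb C^m$ established at the outset, which guarantees that the fiber directions being killed are precisely the extra complex directions, so the induced form on the quotient $T_z^{\mathbb C}\partial\tilde D$ remains positive definite.
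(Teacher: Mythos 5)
Your proposal is correct and follows essentially the same route as the paper: solve $\frac{\partial\rho}{\partial w}=0$ via the implicit function theorem (the $2m\times 2m$ fiber block of the Hessian is positive definite because $\{0\}^n\times\mathbb C^m\subset T^{\mathbb C}_{\Phi(z)}\partial D$), set $\tilde\rho(z):=\rho(z,\psi(z))$, bootstrap $\tilde\rho$ from $C^{k-1}$ to $C^k$ using $\tilde\rho^{\prime}(z)=\frac{\partial\rho}{\partial z}(z,\psi(z))$, verify the sign of $\tilde\rho$ through the fiberwise-minimum characterization, and estimate the Hessian on complex tangent vectors. Your chain-rule identity $\mathcal H\tilde\rho(z)(X)=\mathcal H\rho(\Phi(z))(X,D\psi\cdot X)$ is in fact the more careful version of this last step (the paper's displayed equality with $(X,0)$ in place of the tangent lift holds only up to the subtracted positive fiber term), and your bound via the lifted complex-tangent vector yields the required $\mathcal H\tilde\rho(z)(X)\geq c\|X\|^2$ just as the paper's does.
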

\begin{proof} Denote the variables by $(z,w)=(z_1,\ldots,z_n,w_1,\ldots,w_m)$.

Note that $S:=\Phi(\partial\tilde D)=\left\{(z,w)\in U:\frac{\partial\rho}{\partial w}(z,w)=0,\rho(z,w)=0\right\}\subset\partial D$.
Additionally, note that for $(z,w)\in S$ we have
$T_{(z,w)}^{\mathbb C}\partial D\supset\{0\}^{n}\times\mathbb C^m$.

Consider the equation (actually the formula below represents $2m$ real equations)
\begin{equation}
\frac{\partial\rho}{\partial w}=0.
\end{equation}
We shall below solve the above equation near the points $(z,w)\in S$.

Let us calculate the derivative of the above equation with respect to (real) variables: $\re w_1,\imm w_1,\ldots,\re w_m, \imm w_m$.
Note that the Jacobi matrix of the system of equations with respect to the variables is (part of) the Hesse matrix of $\rho$. This part consists of derivatives with respect to the same systems of $2m$ variables. Denote that square $(2m)\times (2m)$ matrix by $\tilde{\mathcal H}$. Note that for all $Y\in\mathbb C^m$ we have $(0,Y)\in T_{(z,w)}^{\mathbb C}\partial D$. Thus we get the following inequality
\begin{equation}
c||Y||^2\leq \mathcal H\rho(z,w)(0,Y)=\tilde{\mathcal H}\rho(z,w)(0,Y)
\end{equation}
which implies that $\det\tilde{\mathcal H}\rho(z,w)>0$. In other words we may apply the implicit mapping theorem to get near $(z,w)\in S$ the solution of the equation $\frac{\partial\rho}{\partial w}=0$ as a $C^{k-1}$ mapping $\varphi:U(z)\to U(w)$. In particular, $\frac{\partial\rho}{\partial w}(z,\varphi(z))=0$.

Now we arrive at the final part of the proof. Define locally $\tilde\rho(z):=\rho(z,\varphi(z))$. Then $\tilde\rho$ is $C^{k-1}$ smooth. Note that for $z\in\partial D$ we have
$\frac{\partial \tilde\rho}{\partial z}(z)=\frac{\partial\rho}{\partial z}(z,\varphi(z))$ which does not vanish (remember that $\frac{\partial\rho}{\partial w}(z,\varphi(z))=0$, $z\in\partial\tilde D$). Moreover, the last formula implies that $\frac{\partial\tilde\rho}{\partial z}$ is $C^{k-1}$ which implies that $\tilde\rho$ is $C^k$. 

The Hesse matrix of $\rho$ on the vectors $X\in T_z^{\mathbb C}\partial D$ satisfies (remember that $(X,0)\in T_{(z,\varphi(z))}^{\mathbb C}\partial D$)
\begin{equation}
\mathcal H\tilde\rho(z)(X)=\mathcal H\rho(z,\varphi(z))(X,0)\geq c||X||^2
\end{equation}
from which we get the positive definiteness of the Hesse matrix on the complex tangent hyperplane.

We should also see whether $\tilde\rho$ satisfies the properties: $\tilde\rho<0$ on $\tilde D$ and $\tilde\rho>0$ on $\mathbb C^n\setminus\overline{\tilde D}$. But shrinking if necessary the neighborhood of the point where we define $\tilde\rho$ we may assume that the infimum of $\rho$ when restricted to $\{z\}\times\mathbb C^m$ is attained in the neighborhood of the point from $\partial D\subset\mathbb C^{n+m}$ where the implicit function theorem is solved.
But at the point where the  infimum is attained the equality $\frac{\partial\rho}{\partial w}=0$ is satisfied (this is a consequence of $\mathbb C$-strict convexity). This implies the desired property and finishes the proof of the strong linear convexity of $\tilde D$.
\end{proof}

\begin{remark} We should point out the following phenomenon that differs the situation of the $\mathbb C$-convex case from the case of pseudoconvex domains. Recall that the projections of pseudoconvex domains (even strongly pseudoconvex ones) need not be pseudoconvex (see \cite{Pfl 1978}). The above result shows that strongly linearly convex domains behave much more like the convex domains rather than the pseudoconvex ones. Recall also that the idea of the above proof is actually the same as the one in the analogous result in the convex case (see Lemma 7.1.5 in \cite{Hor 1994}). In case of a (real) strongly convex domain the same method also applies and gives the invariance of the corresponding notion under projections.
\end{remark}

\end{document}